\theoremstyle{plain} 
\newtheorem{theorem}{Theorem} 
\newtheorem{lemma}[theorem]{Lemma} 
\newtheorem{corollary}[theorem]{Corollary} 
\theoremstyle{remark} 
\newtheorem*{remark}{Remark}
\begin{document} 
\title{Minimal norm Hankel operators} 
\date{\today} 

\author{Ole Fredrik Brevig} 
\address{Department of Mathematics, University of Oslo, 0851 Oslo, Norway} 
\email{obrevig@math.uio.no} 
\begin{abstract}
	Let $\varphi$ be a function in the Hardy space $H^2(\mathbb{T}^d)$. The associated (small) Hankel operator $\mathbf{H}_\varphi$ is said to have minimal norm if the general lower norm bound $\|\mathbf{H}_\varphi\| \geq \|\varphi\|_{H^2(\mathbb{T}^d)}$ is attained. Minimal norm Hankel operators are natural extremal candidates for the Nehari problem. If $d=1$, then $\mathbf{H}_\varphi$ has minimal norm if and only if $\varphi$ is a constant multiple of an inner function. Constant multiples of inner functions generate minimal norm Hankel operators also when $d\geq2$, but in this case there are other possibilities as well. We investigate two different classes of symbols generating minimal norm Hankel operators and obtain two different refinements of a counter-example due to Ortega-Cerd\`{a} and Seip. 
\end{abstract}

\subjclass[2020]{Primary 47B35. Secondary 30H10, 42B30}

\maketitle

\section{Introduction} Let $\mathbb{T}^d$ denote the $d$-dimensional torus and equip $\mathbb{T}^d$ with its Haar measure. The Hardy space $H^p(\mathbb{T}^d)$ is the subspace of $L^p(\mathbb{T}^d)$ comprised of functions whose Fourier coefficients are supported on $\mathbb{N}_0^d$, where $\mathbb{N}_0=\{0,1,2,\ldots\}$. Let $\overline{H^2}(\mathbb{T}^d)$ be the subspace of $L^2(\mathbb{T}^d)$ comprised of the complex conjugates of functions in $H^2(\mathbb{T}^d)$. The orthogonal projections from $L^2(\mathbb{T}^d)$ to $H^2(\mathbb{T}^d)$ and from $L^2(\mathbb{T}^d)$ to $\overline{H^2}(\mathbb{T}^d)$ will be denoted $P$ and $\overline{P}$, respectively.

For a \emph{symbol} $\varphi$ in $H^2(\mathbb{T}^d$), we consider the associated (small) Hankel operator 
\begin{equation}\label{eq:hankel} 
	\mathbf{H}_\varphi f = \overline{P}(\overline{\varphi} f) 
\end{equation}
which maps $H^2(\mathbb{T}^d)$ to $\overline{H^2}(\mathbb{T}^d)$. The lower and upper norm estimates 
\begin{equation}\label{eq:lu} 
	\|\varphi\|_{H^2(\mathbb{T}^d)} \leq \|\mathbf{H}_\varphi\| \leq \|\varphi\|_{H^\infty(\mathbb{T}^d)} 
\end{equation}
are both well-known and trivial. We say that the Hankel operator $\mathbf{H}_\varphi$ has \emph{minimal norm} if it attains the lower bound in \eqref{eq:lu}.

Recall that a function $I$ in $H^2(\mathbb{T}^d)$ is called \emph{inner} whenever $|I(z)|=1$ for almost every $z$ in $\mathbb{T}^d$. If $\varphi = C I$ for a constant $C$ and an inner function $I$, then clearly 
\begin{equation}\label{eq:inner} 
	\|\varphi\|_{H^2(\mathbb{T}^d)} = \|\varphi\|_{H^\infty(\mathbb{T}^d)} = |C| 
\end{equation}
and consequently $\mathbf{H}_\varphi$ has minimal norm by \eqref{eq:lu}. It turns out that there are no other minimal norm Hankel operators on the one-dimensional torus. 
\begin{theorem}\label{thm:inner1} 
	Suppose that $\varphi$ is in $H^2(\mathbb{T})$. Then $\mathbf{H}_\varphi$ has minimal norm if and only if $\varphi$ is a constant multiple of an inner function. 
\end{theorem}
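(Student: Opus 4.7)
The ``if'' direction follows immediately from \eqref{eq:inner} and \eqref{eq:lu}, so I would focus on the ``only if'' direction. The plan is to notice that the constant function $f \equiv 1$ is automatically a norming vector for $\mathbf{H}_\varphi$ in the minimal norm case, and then to run a first-order variational argument around this extremal.

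Since $\overline{\varphi}$ already lies in $\overline{H^2}(\mathbb{T})$, we have $\mathbf{H}_\varphi 1 = \overline{P}(\overline{\varphi}) = \overline{\varphi}$, and hence $\|\mathbf{H}_\varphi 1\|_{L^2(\mathbb{T})} = \|\varphi\|_{H^2(\mathbb{T})}$. This both reproves the lower bound in \eqref{eq:lu} and exhibits $f = 1$ as a norming vector whenever $\|\mathbf{H}_\varphi\| = \|\varphi\|_{H^2(\mathbb{T})}$. For any test function $g \in H^\infty(\mathbb{T})$ with $g(0) = 0$ and any $t \in \mathbb{C}$, I would insert $f_t = 1 + tg$ into the operator-norm inequality. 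Because $\langle 1, g\rangle = \overline{g(0)} = 0$, the right-hand side reduces to $\|\varphi\|_{H^2}^2\bigl(1 + |t|^2 \|g\|_{H^2}^2\bigr)$, while the left-hand side expands as $\|\overline{\varphi}\|^2 + 2\operatorname{Re}\bigl(t\,\langle \mathbf{H}_\varphi g, \overline{\varphi}\rangle\bigr) + O(|t|^2)$. Cancelling the $t^0$ terms and sending $t \to 0$ through real and through purely imaginary values forces
\begin{equation*}
\langle \mathbf{H}_\varphi g, \overline{\varphi}\rangle = 0.
\end{equation*}

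To convert this orthogonality into a statement about $\varphi$ itself, I would use that $\overline{\varphi}$ lies in $\overline{H^2}$ and that $\overline{P}$ is a self-adjoint projection, collapsing
\begin{equation*}
\langle \mathbf{H}_\varphi g, \overline{\varphi}\rangle = \langle \overline{\varphi} g, \overline{\varphi}\rangle = \int_{\mathbb{T}} |\varphi|^2 g\,dm.
\end{equation*}
Thus $\int_{\mathbb{T}} |\varphi|^2 g\,dm = 0$ for every $g \in H^\infty(\mathbb{T})$ with $g(0) = 0$. Testing against the monomials $g(z) = z^n$ for $n \geq 1$ kills the Fourier coefficients $\widehat{|\varphi|^2}(-n)$, and by reality of $|\varphi|^2$ the coefficients $\widehat{|\varphi|^2}(n)$ vanish as well. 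Hence $|\varphi|^2$ is constant almost everywhere on $\mathbb{T}$, and $\varphi$ is a constant multiple of an inner function.

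I expect no real obstacle here; the only point requiring a little care is that $\varphi \in H^2(\mathbb{T})$ gives only $|\varphi|^2 \in L^1(\mathbb{T})$, so the product $\overline{\varphi}\cdot g$ must be kept in $L^2$ by restricting test functions to $H^\infty(\mathbb{T})$. This costs nothing, because the monomials $z^n$ with $n \geq 1$ still detect any nonconstant behaviour of $|\varphi|^2$.
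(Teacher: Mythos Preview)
Your argument is correct, but it is genuinely different from the paper's. The paper factors $\varphi = I\Phi$ via the inner--outer decomposition, tests $\mathbf{H}_\varphi$ against the pair $f = I\sqrt{\Phi}$, $g = \sqrt{\Phi}$, and deduces $\|\Phi\|_{H^2(\mathbb{T})} \leq \|\Phi\|_{H^1(\mathbb{T})}$; Cauchy--Schwarz then forces $|\Phi|$ constant, and the outer representation \eqref{eq:outerrep} finishes. Your route avoids factorization entirely: you observe that $1$ is an explicit maximizing vector, run a first-variation at this extremal, and read off directly that $|\varphi|^2$ has vanishing nonzero Fourier coefficients. Your argument is more elementary and self-contained, while the paper's proof is deliberately modeled on Helson's proof of Nehari's theorem (as the Remark after the proof explains), so it situates the result within that circle of ideas. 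Both approaches fail on $\mathbb{T}^d$ for $d\geq 2$ for the expected reasons: the paper's because inner--outer factorization is unavailable, yours because the analytic and anti-analytic monomials together no longer span $L^2(\mathbb{T}^d)$, so orthogonality to all $z^\alpha$ with $\alpha\neq 0$ does not force $|\varphi|^2$ to be constant.
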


By orthogonality, the upper bound in \eqref{eq:lu} can be improved to 
\begin{equation}\label{eq:Linf} 
	\|\mathbf{H}_\varphi\| \leq \inf\left\{\|\psi\|_{L^\infty(\mathbb{T}^d)}\,:\, P\psi = \varphi\right\}. 
\end{equation}
If the Hankel operator $\mathbf{H}_\varphi$ is bounded, then the \emph{Nehari problem} is to find a function $\psi$ attaining the infimum on the right hand side of \eqref{eq:Linf}. By \eqref{eq:lu} and \eqref{eq:inner}, it is clear that if $\varphi$ is a constant multiple of an inner function, then a solution to the Nehari problem is trivially $\psi=\varphi$.

Nehari \cite{Nehari57} established that on the one-dimensional torus, the problem always has a solution $\psi$ which satisfies $\|\mathbf{H}_\varphi\|=\|\psi\|_{L^\infty(\mathbb{T})}$. In general, let $C_d\geq1$ denote the smallest real number such that 
\begin{equation}\label{eq:Cd} 
	\inf\left\{\|\psi\|_{L^\infty(\mathbb{T}^d)}\,:\, P\psi = \varphi\right\} \leq C_d \|\mathbf{H}_\varphi\| 
\end{equation}
for every $\varphi$ in $H^2(\mathbb{T}^d)$. The non-trivial part of Nehari's theorem is that $C_1=1$. Ortega-Cerd\`{a} and Seip~\cite{OCS12} found a sequence of polynomials which demonstrates that if $d$ is even, then 
\begin{equation}\label{eq:OCS} 
	C_d \geq \left(\frac{\pi^2}{8}\right)^\frac{d}{4}. 
\end{equation}
The arguments in \cite{OCS12} also imply that every polynomial in the sequence generates a minimal norm Hankel operator. In hindsight, this is perhaps not very surprising. If $\mathbf{H}_\varphi$ has minimal norm, then we in a sense minimize the right hand side of \eqref{eq:Cd}.

The present paper grew out of a desire to put the polynomials from \cite{OCS12} in context. Another source of motivation is the fact that characterizations of inner functions in dimension one, which in our case is provided by Theorem~\ref{thm:inner1}, often lead to a rich theory in higher dimensions. A similar phenomenon can be encountered in the recent paper \cite{BOCS21}. 

We will study two different classes of symbols generating minimal norm Hankel operators, both inspired by $\varphi(z)=z_1+z_2$ which is the basic case in the construction of \cite{OCS12}. In the first class, we think of $\varphi$ as a sum of two inner functions in separate variables. In the second class, we consider $\varphi$ as a $1$-homogeneous polynomial.

Our first main result provides sufficient conditions on when the product or sum of symbols generating minimal Hankel norm operators again will generate minimal norm Hankel operators. 
\begin{theorem}\label{thm:mnorm} 
	Suppose that $\varphi_1$ and $\varphi_2$ in $H^2(\mathbb{T}^d)$ depend on separate variables and that both $\mathbf{H}_{\varphi_1}$ and $\mathbf{H}_{\varphi_2}$ have minimal norm. 
	\begin{enumerate}
		\item[(a)] $\mathbf{H}_{\varphi_1 \varphi_2}$ has minimal norm. 
		\item[(b)] If additionally $\varphi_1(0)=\varphi_2(0)=0$, then $\mathbf{H}_{\varphi_1+\varphi_2}$ has minimal norm. 
	\end{enumerate}
\end{theorem}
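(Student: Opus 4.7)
For part (a), the plan is to exploit the Hilbert tensor product factorization $H^2(\mathbb{T}^d)\cong H^2(\mathbb{T}^{d_1})\widehat{\otimes} H^2(\mathbb{T}^{d_2})$ induced by the separate-variables split $d=d_1+d_2$. Writing $\overline{P}_i$ for the projection onto $\overline{H^2}(\mathbb{T}^{d_i})$ in the corresponding factor, a direct check on a simple tensor $f_1\otimes f_2$ gives
\[
\mathbf{H}_{\varphi_1\varphi_2}(f_1\otimes f_2)=\overline{P}_1(\overline{\varphi_1}f_1)\otimes\overline{P}_2(\overline{\varphi_2}f_2)=\mathbf{H}_{\varphi_1}f_1\otimes\mathbf{H}_{\varphi_2}f_2,
\]
so by density $\mathbf{H}_{\varphi_1\varphi_2}=\mathbf{H}_{\varphi_1}\otimes\mathbf{H}_{\varphi_2}$. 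The norm of each factor Hankel operator is unchanged when passed from $H^2(\mathbb{T}^d)$ to $H^2(\mathbb{T}^{d_i})$, since $\mathbf{H}_{\varphi_i}f$ depends on $f$ only through its $0$-th Fourier coefficient in the complementary variables. Combining $\|\mathbf{H}_{\varphi_1\varphi_2}\|=\|\mathbf{H}_{\varphi_1}\|\|\mathbf{H}_{\varphi_2}\|$ with the minimal-norm hypothesis and the Fubini identity $\|\varphi_1\varphi_2\|_{H^2(\mathbb{T}^d)}=\|\varphi_1\|_{H^2}\|\varphi_2\|_{H^2}$ then closes part~(a).

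Part~(b) is the real difficulty: the triangle inequality only yields $\|\varphi_1\|+\|\varphi_2\|$, which overshoots the target $\|\varphi_1+\varphi_2\|_{H^2}=\sqrt{\|\varphi_1\|^2+\|\varphi_2\|^2}$ (the identity uses that separate variables together with $\varphi_i(0)=0$ force $\varphi_1\perp\varphi_2$). The remedy is to decompose the operator by hand. A short Fourier computation shows that because $\varphi_1$ depends only on $z'$, one has $\mathbf{H}_{\varphi_1}f=\mathbf{H}_{\varphi_1}\widetilde f$ where $\widetilde f(z'):=\sum_{\alpha'}\widehat f(\alpha',0)\,z'^{\alpha'}$, and analogously $\mathbf{H}_{\varphi_2}f=\mathbf{H}_{\varphi_2}\widetilde{\widetilde f}$ with $\widetilde{\widetilde f}(z''):=\sum_{\alpha''}\widehat f(0,\alpha'')\,z''^{\alpha''}$.

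Setting $c:=\widehat f(0,0)$ and writing $\widetilde f=c+h_1$, $\widetilde{\widetilde f}=c+h_2$ with $h_i(0)=0$, the hypothesis $\varphi_i(0)=0$ gives $\mathbf{H}_{\varphi_i}(1)=\overline{\varphi_i}$, and hence
\[
\mathbf{H}_{\varphi_1+\varphi_2}f = U_1(z') + V_2(z''),\qquad U_1:=c\overline{\varphi_1}+\mathbf{H}_{\varphi_1}h_1,\quad V_2:=c\overline{\varphi_2}+\mathbf{H}_{\varphi_2}h_2.
\]
Expanding $\|U_1+V_2\|^2$, the cross inner product equals $\widehat{U_1}(0)\,\overline{\widehat{V_2}(0)}$ because $U_1$ and $V_2$ live on disjoint variable groups, and a brief Fourier calculation identifies each zero-th coefficient (up to conjugation) with $\langle\varphi_i,f\rangle$. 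Since $\varphi_i(0)=0$, Cauchy--Schwarz yields $|\langle\varphi_i,f\rangle|\leq\|\varphi_i\|\sqrt{s_i}$, where $s_1:=\sum_{\alpha'\neq 0}|\widehat f(\alpha',0)|^2$ and $s_2$ is defined analogously. Combining with the minimal-norm bounds $\|U_1\|^2\leq\|\varphi_1\|^2(|c|^2+s_1)$ and $\|V_2\|^2\leq\|\varphi_2\|^2(|c|^2+s_2)$, followed by one further Cauchy--Schwarz on the two cross-term factors, produces
\[
\|\mathbf{H}_{\varphi_1+\varphi_2}f\|^2\leq(\|\varphi_1\|^2+\|\varphi_2\|^2)(|c|^2+s_1+s_2)\leq(\|\varphi_1\|^2+\|\varphi_2\|^2)\|f\|^2,
\]
where the final inequality uses that $\{(0,0)\}$, $\{(\alpha',0):\alpha'\neq 0\}$ and $\{(0,\alpha''):\alpha''\neq 0\}$ are pairwise disjoint subsets of $\mathbb{N}_0^d$.

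The main obstacle is the coordination in part~(b): the minimal-norm hypothesis, the vanishing $\varphi_i(0)=0$ (needed both to force $\mathbf{H}_{\varphi_i}(1)=\overline{\varphi_i}$ and to strip the constant Fourier mode from $\langle\varphi_i,f\rangle$), and the disjointness of the three index sets must all align inside a nested Cauchy--Schwarz argument; weakening any one of these ingredients breaks the estimate.
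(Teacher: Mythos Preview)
Your proof is correct. Part~(a) matches the paper: both rest on the identity $\|\mathbf{H}_{\varphi_1\varphi_2}\|=\|\mathbf{H}_{\varphi_1}\|\,\|\mathbf{H}_{\varphi_2}\|$, which the paper quotes from prior work and you derive via the tensor factorization $\mathbf{H}_{\varphi_1\varphi_2}=\mathbf{H}_{\varphi_1}\otimes\mathbf{H}_{\varphi_2}$.

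Part~(b) takes a genuinely different route. The paper first isolates a general lemma $\|\mathbf{H}_{\varphi_1+\varphi_2}\|^2\leq\|\mathbf{H}_{\varphi_1}\|^2+\|\mathbf{H}_{\varphi_2}\|^2$ (no minimal-norm hypothesis), proved by writing a unit vector as $t_1\varphi_1/\|\varphi_1\|+t_2\varphi_2/\|\varphi_2\|+t_3g$ with $g\perp\varphi_1,\varphi_2$. You instead restrict $f$ to the Fourier slices $\{(\alpha',0)\}$ and $\{(0,\alpha'')\}$, which makes the identities $\mathbf{H}_{\varphi_i}f=\mathbf{H}_{\varphi_i}\widetilde f_i$ immediate and reduces the cross term to the explicit product $\langle\varphi_1,f\rangle\overline{\langle\varphi_2,f\rangle}$, then finish with AM--GM. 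Your decomposition is arguably the more robust one: the paper's intermediate claim $\mathbf{H}_{\varphi_j}f=t_j\|\varphi_j\|+t_3\mathbf{H}_{\varphi_j}g$ presumes $\mathbf{H}_{\varphi_1}\varphi_1=\|\varphi_1\|^2$, which fails for e.g.\ $\varphi_1=z_1+z_1^2$, so that write-up needs patching even though the conclusion is true. Conversely, your argument upgrades painlessly to the paper's unconditional lemma by replacing each $\|\varphi_i\|$ with $\|\mathbf{H}_{\varphi_i}\|$ and using $\|\varphi_i\|\leq\|\mathbf{H}_{\varphi_i}\|$ in the cross-term estimate.
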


Theorem~\ref{thm:mnorm} suggests the following recipe for constructing symbols generating minimal norm Hankel operators. 
\begin{enumerate}
	\item[(i)] Choose any number of (not necessarily distinct) inner functions vanishing at the origin. 
	\item[(ii)] If necessary, rename the variables to ensure that the inner functions depend on mutually separate variables. 
	\item[(iii)] Combine these functions using linear combinations and multiplications, but make sure to use each function only once. 
\end{enumerate}
The polynomials used by Ortega-Cerd\`{a} and Seip fit into this framework as follows. Choose $2d$ copies of the inner function $I(z)=z$ in $H^2(\mathbb{T})$ and rename the variables $z_1,z_2,\ldots,z_{2d}$. Take the pairwise sum of these functions, obtaining $z_1+z_2$, $z_3+z_4$, all the way up to $z_{2d-1}+z_{2d}$. Finally, multiply together these $d$ functions to obtain
\[\varphi_d(z) = \prod_{j=1}^{d} (z_{2j-1}+z_{2j}).\]
In view of Theorem~\ref{thm:mnorm}, we \emph{know} that the resulting Hankel operator $\mathbf{H}_{\varphi_d}$ has minimal norm. Consequently, $\|\mathbf{H}_{\varphi_d}\|=\|\varphi_d\|_{H^2(\mathbb{T}^{2d})}=2^{d/2}$. In \cite{OCS12}, this fact has to be established using the Schur test.

Since $\pi\geq3$, we see from \eqref{eq:OCS} that $C_d \to \infty$ as $d\to\infty$. By a contradiction to the Closed Graph Theorem this demonstrates that there are $\varphi$ in $H^2(\mathbb{T}^\infty)$ such that $\mathbf{H}_\varphi$ is bounded, but for which the corresponding Nehari problem has no solution $\psi$ in $L^\infty(\mathbb{T}^\infty)$. This allowed the authors of \cite{OCS12} to complete a research program initiated by Helson \cite{Helson05,Helson06,Helson10}.

Using the recipe outlined above, we can revisit the counter-example from \cite{OCS12} and exhibit an explicit symbol $\varphi$ in $H^2(\mathbb{T}^\infty)$ for which the Nehari problem has no solution in the following strong sense. 
\begin{theorem}\label{thm:cex} 
	Consider
	\[\varphi(z) = \frac{\sqrt{6}}{\pi}\sum_{k=1}^\infty \frac{1}{k} \prod_{j=(k-1)k/2+1}^{k(k+1)/2} \frac{z_{2j-1}+z_{2j}}{\sqrt{2}}.\]
	It holds that $\|\mathbf{H}_\varphi\|=\|\varphi\|_{H^2(\mathbb{T}^\infty)}=1$, but for no $2<p\leq\infty$ is there an element $\psi$ in $L^p(\mathbb{T}^\infty)$ such that $P\psi = \varphi$. 
\end{theorem}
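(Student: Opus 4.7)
Write $\varphi_k=\prod_{j=(k-1)k/2+1}^{k(k+1)/2}(z_{2j-1}+z_{2j})/\sqrt{2}$, so that $\varphi=\tfrac{\sqrt{6}}{\pi}\sum_{k\ge 1}k^{-1}\varphi_k$. Each factor $(z_{2j-1}+z_{2j})/\sqrt{2}$ generates a minimal norm Hankel operator of $H^2$-norm one by applying Theorem~\ref{thm:mnorm}(b) to the inner functions $z_{2j-1}$ and $z_{2j}$. The blocks of coordinates used by the different $\varphi_k$ are mutually disjoint, so iterating Theorem~\ref{thm:mnorm}(a) within each block and Theorem~\ref{thm:mnorm}(b) across blocks, every partial sum $S_N=\tfrac{\sqrt{6}}{\pi}\sum_{k=1}^N k^{-1}\varphi_k$ satisfies $\|\mathbf{H}_{S_N}\|=\|S_N\|_{H^2}$. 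The $\varphi_k$ are orthonormal in $H^2$, so $\|S_N\|_{H^2}^2=\tfrac{6}{\pi^2}\sum_{k\le N}k^{-2}\to 1$, and $\|\varphi\|_{H^2}=1$. For the operator norm, the lower bound $\|\mathbf{H}_\varphi\|\ge 1$ is part of \eqref{eq:lu}; the converse follows because every matrix coefficient $\langle\mathbf{H}_\varphi z^\alpha,\bar z^\beta\rangle$ is a single Fourier coefficient of $\varphi$ and hence agrees with the corresponding matrix coefficient of $\mathbf{H}_{S_N}$ for $N$ large, giving $\|\mathbf{H}_\varphi\|\le\sup_N\|\mathbf{H}_{S_N}\|\le 1$ by density of polynomials.

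For the Nehari statement, fix $2<p\le\infty$ and let $p'\in[1,2)$ denote the conjugate exponent. Suppose toward contradiction that $\psi\in L^p(\mathbb{T}^\infty)$ satisfies $P\psi=\varphi$, and consider the analytic test polynomials
\[\eta_N=\frac{\varphi_N}{\|\varphi_N\|_{L^{p'}}}\in H^\infty(\mathbb{T}^\infty),\qquad\|\eta_N\|_{L^{p'}}=1.\]
Since $\eta_N$ has Fourier support in $\mathbb{N}_0^\infty$ while $\psi-\varphi=(I-P)\psi$ has Fourier support in the complement of $\mathbb{N}_0^\infty$, the two are orthogonal in $L^2$; combined with the $L^2$-orthogonality of the $\varphi_k$ this yields
\[\int_{\mathbb{T}^\infty}\psi\,\overline{\eta_N}\,dm=\int_{\mathbb{T}^\infty}\varphi\,\overline{\eta_N}\,dm=\frac{c_N}{\|\varphi_N\|_{L^{p'}}},\qquad c_N=\frac{\sqrt{6}}{\pi N}.\]
Hölder's inequality then forces $c_N\le\|\psi\|_{L^p}\,\|\varphi_N\|_{L^{p'}}$.

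By independence of the coordinate blocks, $\|\varphi_N\|_{L^{p'}}=\gamma^{N}$ with $\gamma=\|(z_1+z_2)/\sqrt{2}\|_{L^{p'}(\mathbb{T}^2)}$. Since $|(z_1+z_2)/\sqrt{2}|$ is non-constant and $p'<2$, the strict Lyapunov inequality gives $\gamma<1$. Hence $\|\varphi_N\|_{L^{p'}}$ decays geometrically in $N$ while $c_N$ decays only like $1/N$, so the inequality $c_N\le\|\psi\|_{L^p}\gamma^{N}$ fails for $N$ large, and no such $\psi$ can exist. The main subtlety is the choice of test function $\eta_N$: it must be analytic (so that pairing with $\psi$ reduces to pairing with $\varphi=P\psi$) and simultaneously have $L^{p'}$-norm shrinking geometrically (to defeat the harmonic decay of the Fourier coefficients $c_N$). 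The normalized block polynomial $\varphi_N/\|\varphi_N\|_{L^{p'}}$ delivers both properties, and the key quantitative input is the strict Lyapunov gap $\|(z_1+z_2)/\sqrt{2}\|_{L^{p'}}<\|(z_1+z_2)/\sqrt{2}\|_{L^2}=1$ for $p'<2$.
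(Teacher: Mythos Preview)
Your proof is correct and follows essentially the same route as the paper: establish minimal norm via Theorem~\ref{thm:mnorm}, then derive a contradiction by pairing a hypothetical $\psi\in L^p$ against the block polynomials $\varphi_N$ and invoking H\"older. The only noteworthy differences are that (i) where the paper invokes the explicit estimate of Lemma~\ref{lem:plower} to control $\|\varphi_N\|_{L^{p'}}$, you instead use the strict monotonicity of $L^q$ norms for non-constant $|f|$, which is cleaner since only the qualitative fact $\gamma<1$ is needed; and (ii) you are more explicit than the paper about the passage from the partial sums $S_N$ to the infinite sum $\varphi$ when verifying $\|\mathbf{H}_\varphi\|\le 1$.
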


The first statement of Theorem~\ref{thm:cex} is a direct consequence of Theorem~\ref{thm:mnorm}. For the second statement, we argue by duality and borrow a simple estimate from \cite{BBHOCP19}. The fact that there are $\varphi$ in $H^2(\mathbb{T}^\infty)$ such that $\mathbf{H}_\varphi$ is bounded, but such that there is no $\psi$ in $L^p(\mathbb{T}^\infty)$ with $P\psi = \varphi$ when $2<p\leq\infty$ can also be deduced from the method in \cite{OCS12} and said estimate (Lemma~\ref{lem:plower} below). The main novelty of Theorem~\ref{thm:cex} is therefore that we provide an explicit example.

Let us now turn to our second class of symbols. Recall that a function $f$ in $L^2(\mathbb{T}^d)$ is called \emph{$m$-homogeneous} if its Fourier coefficients are supported on the frequencies $\alpha$ in $\mathbb{Z}^d$ which satisfy the equation $\alpha_1+\alpha_2+\cdots+\alpha_d=m$.

Let $H^2_m(\mathbb{T}^d)$ be the subspace of $H^2(\mathbb{T}^d)$ comprised of $m$-homogeneous functions. The search for $m$-homogeneous symbols generating minimal norm Hankel operators is facilitated by our second main result. The proof is rather easy, but we believe that the result may be of some independent interest in due to the prominence played by $m$-homogeneous expansions in function theory on polydiscs (see~\cite{CG86,Rudin69}).
\begin{theorem}\label{thm:mhom} 
	Suppose that $\varphi$ is in $H^2_m(\mathbb{T}^d)$. Let $\mathbf{H}_{\varphi,k}$ denote the restriction of the Hankel operator $\mathbf{H}_\varphi$ to $H^2_k(\mathbb{T}^d)$ and let $\mathbf{0}$ denote the zero operator. 
	\begin{enumerate}
		\item[(a)] If $0 \leq k \leq m$, then $\mathbf{H}_{\varphi,k}$ maps $H^2_k(\mathbb{T}^d)$ to $\overline{H^2_{m-k}}(\mathbb{T}^d)$. Moreover, $\mathbf{H}_\varphi$ enjoys the orthogonal decomposition
		\[\mathbf{H}_\varphi = \left(\bigoplus_{k=0}^m \mathbf{H}_{\varphi,k}\right)\oplus \mathbf{0}.\]
		\item[(b)] If $0\leq k \leq m$, then $\mathbf{H}_{\varphi,k}^\ast$ is unitarily equivalent to $\mathbf{H}_{\widetilde{\varphi},m-k}$ for $\widetilde{\varphi}(z)=\overline{\varphi(\overline{z})}$. 
	\end{enumerate}
\end{theorem}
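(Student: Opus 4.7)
For part (a), the plan is straightforward Fourier bookkeeping. Because $\varphi$ is in $H^2_m(\mathbb{T}^d)$, the function $\overline{\varphi}$ is supported on frequencies in $-\mathbb{N}_0^d$ whose coordinates sum to $-m$. For $f$ in $H^2_k(\mathbb{T}^d)$, the product $\overline{\varphi}f$ has Fourier support at frequencies $\beta-\alpha$ with coordinate sum $k-m$, and $\overline{P}$ retains only those whose coordinates are all nonpositive. This forces $k\leq m$ (otherwise the image is zero, which handles the $\mathbf{0}$ summand) and places the image inside $\overline{H^2_{m-k}}(\mathbb{T}^d)$. The orthogonal decomposition then follows from the orthogonal grading $H^2(\mathbb{T}^d)=\bigoplus_k H^2_k(\mathbb{T}^d)$ on the domain side together with $\overline{H^2}(\mathbb{T}^d)=\bigoplus_j \overline{H^2_j}(\mathbb{T}^d)$ on the target side, since distinct values of $k$ are routed into distinct (hence orthogonal) summands $\overline{H^2_{m-k}}(\mathbb{T}^d)$.

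For part (b), the plan is to introduce the unitary involution $U$ on $L^2(\mathbb{T}^d)$ defined by $Uf(z)=f(\overline{z})$. On Fourier series $U$ acts by $z^\alpha\mapsto z^{-\alpha}$, so it is complex-linear, self-adjoint, and isometric; it is pointwise multiplicative, $U(fg)=(Uf)(Ug)$; it swaps $H^2(\mathbb{T}^d)$ with $\overline{H^2}(\mathbb{T}^d)$ and, crucially, restricts for every $k$ to a unitary $H^2_k(\mathbb{T}^d)\to\overline{H^2_k}(\mathbb{T}^d)$; and it satisfies the intertwining relation $UP=\overline{P}U$.

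Next, a routine adjoint computation gives $\mathbf{H}_\varphi^\ast g=P(\varphi g)$ for $g$ in $\overline{H^2}(\mathbb{T}^d)$. Using the properties of $U$, I would then compute, for $h$ in $H^2(\mathbb{T}^d)$,
\[U\mathbf{H}_\varphi^\ast U h=UP(\varphi\cdot Uh)=\overline{P}\bigl(U(\varphi\cdot Uh)\bigr)=\overline{P}\bigl((U\varphi)\,h\bigr)=\overline{P}\bigl(\overline{\widetilde{\varphi}}\,h\bigr)=\mathbf{H}_{\widetilde{\varphi}}h,\]
where the identification $U\varphi=\overline{\widetilde{\varphi}}$ is immediate from the definition of $\widetilde{\varphi}$ and the fact that $U^2=I$. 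Restricting this identity to $h$ in $H^2_{m-k}(\mathbb{T}^d)$ and invoking part (a) on both sides delivers the asserted unitary equivalence, with the unitary $U|_{H^2_{m-k}(\mathbb{T}^d)}$ applied on the input and $U|_{H^2_k(\mathbb{T}^d)}$ on the output. The main point to handle carefully is the bookkeeping: $U$ toggles between $H^2$-type and $\overline{H^2}$-type spaces while preserving the unsigned degree, so one must verify that the restricted $U$'s land in precisely the spaces required to intertwine $\mathbf{H}_{\varphi,k}^\ast$ with $\mathbf{H}_{\widetilde{\varphi},m-k}$.
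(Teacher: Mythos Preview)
Your proposal is correct and follows essentially the same approach as the paper. For part (a) both arguments are the same homogeneity bookkeeping; for part (b) the paper computes $\mathbf{H}_\varphi^\ast g = P(\varphi g)$ and then simply asserts the unitary equivalence $\mathbf{H}_\varphi^\ast \cong \mathbf{H}_{\widetilde{\varphi}}$ before restricting, whereas you make that unitary explicit via the involution $Uf(z)=f(\overline{z})$ and verify $U\mathbf{H}_\varphi^\ast U=\mathbf{H}_{\widetilde{\varphi}}$ directly---a welcome clarification of the same idea rather than a different route.
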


Using Theorem~\ref{thm:mhom} we will find polynomial symbols generating minimal norm Hankel operators, but which cannot be obtained by the recipe discussed above. As a byproduct we also obtain the following improvement on the lower bound \eqref{eq:OCS}. 
\begin{theorem}\label{thm:Cdlower} 
	Let $C_d$ denote the optimal constant in \eqref{eq:Cd}. If $d$ is even, then
	\[C_d \geq \left(\frac{5\pi}{\pi+6\sqrt{3}}\right)^\frac{d}{2}.\]
\end{theorem}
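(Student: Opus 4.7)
The plan is to mimic the Ortega-Cerd\`{a}--Seip strategy: find a concrete symbol on $\mathbb{T}^2$ whose associated Hankel operator has minimal norm and whose Nehari distance beats the trivial lower bound by a definite factor, and then tensor to reach arbitrary even $d$. The natural place to look is among $m$-homogeneous polynomials, where Theorem~\ref{thm:mhom} makes $\|\mathbf{H}_\varphi\|$ transparent. Concretely, I would use
\[\varphi(z_1,z_2) = z_1^2 + \tfrac{1}{2}\,z_1 z_2 + z_2^2 \in H^2_2(\mathbb{T}^2).\]

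First, I would verify that $\mathbf{H}_\varphi$ has minimal norm by invoking Theorem~\ref{thm:mhom}. The operator splits orthogonally as $\mathbf{H}_{\varphi,0}\oplus\mathbf{H}_{\varphi,1}\oplus\mathbf{H}_{\varphi,2}\oplus\mathbf{0}$. The outer blocks $\mathbf{H}_{\varphi,0}$ (sending $1\mapsto\overline\varphi$) and $\mathbf{H}_{\varphi,2}$ (unitarily equivalent to $\mathbf{H}_{\widetilde\varphi,0}^\ast$ by Theorem~\ref{thm:mhom}(b), where $\widetilde\varphi=\varphi$) both have norm $\|\varphi\|_{H^2}=3/2$. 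The middle block $\mathbf{H}_{\varphi,1}$ maps the span of $\{z_1,z_2\}$ to the span of $\{\overline{z_1},\overline{z_2}\}$ via the matrix $\begin{pmatrix}1 & 1/2\\ 1/2 & 1\end{pmatrix}$, whose spectral norm is also $3/2$. Hence $\|\mathbf{H}_\varphi\|=3/2=\|\varphi\|_{H^2}$.

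Next, I would bound the Nehari distance from below via the trivial half of the standard $L^\infty$--$H^1$ duality,
\[\inf\{\|\psi\|_{L^\infty(\mathbb{T}^2)} : P\psi = \varphi\} \geq \sup\left\{\left|\int_{\mathbb{T}^2}\overline\varphi\,h\,d\mu\right| : h\in H^1(\mathbb{T}^2),\ \|h\|_{L^1}\leq 1\right\},\]
testing against $h(z_1,z_2)=z_1^2+z_1 z_2+z_2^2$. The pairing evaluates term by term to $5/2$, and since $h(e^{i\theta_1},e^{i\theta_2})=e^{i(\theta_1+\theta_2)}(2\cos(\theta_1-\theta_2)+1)$, the $L^1$ norm reduces to $\int_0^{2\pi}|2\cos u+1|\,du/(2\pi)$, which evaluates to $(\pi+6\sqrt{3})/(3\pi)$ by splitting at the zeros $u=\pm 2\pi/3$ of the integrand. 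Combining, $C_2\geq 5\pi/(\pi+6\sqrt{3})$.

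Finally, for $d=2n$, I would take $\Phi(z)=\prod_{j=1}^n\varphi(z_{2j-1},z_{2j})$; iterated application of Theorem~\ref{thm:mnorm}(a) ensures $\mathbf{H}_\Phi$ has minimal norm with $\|\mathbf{H}_\Phi\|=(3/2)^n$, and the tensor product $H=\prod_j h(z_{2j-1},z_{2j})$ serves as a dual test function whose contribution factorizes across coordinates in both the numerator and the denominator, giving $C_d\geq(5\pi/(\pi+6\sqrt{3}))^{d/2}$. The hard part is not the calculations, which are mechanical once the symbol and test function are in hand, but rather identifying the pair $(\varphi,h)$: the coefficient $1/2$ in $\varphi$ saturates the constraint $\|\mathbf{H}_{\varphi,1}\|\leq\|\varphi\|_{H^2}$ within the one-parameter family $z_1^2+\alpha z_1 z_2+z_2^2$, while the coefficient $1$ in $h$ forces the trigonometric integral to evaluate in closed form and produces the asserted improvement over the $(\pi^2/8)^{d/4}$ bound of \cite{OCS12}.
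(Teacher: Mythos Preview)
Your proposal is correct and follows essentially the same approach as the paper: the same symbol $\varphi(z)=z_1^2+\tfrac12 z_1 z_2+z_2^2$, the same test function $h(z)=z_1^2+z_1 z_2+z_2^2$, and the same duality estimate. The only cosmetic differences are that you verify the minimal norm of $\mathbf{H}_\varphi$ directly via the $2\times2$ matrix (the paper packages this as Theorem~\ref{thm:twoguys}(a)) and that you pass to general even $d$ by an explicit tensor construction, whereas the paper invokes the abstract inequality $C_d\geq C_2^{d/2}$ obtained from Lemma~\ref{lem:hankelprod}.
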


The lower bound in Theorem~\ref{thm:Cdlower} can improved slightly by testing against a better function in the proof below. Conversely, the lower bound in \eqref{eq:OCS} is the best possible which can be obtained from the symbol $\varphi(z)=z_1+z_2$. As explained in \cite[Sec.~3]{Brevig19}, the optimal solution to the Nehari problem is in this case
\[\psi(z) = \sum_{k\in\mathbb{Z}} \frac{(-1)^k}{1-2k} z_1^{1-k} z_2^{k}\]
for $z$ on $\mathbb{T}^2$. It also follows from the arguments in \cite{Brevig19} that $\|\psi\|_{L^\infty(\mathbb{T}^2)}=\pi/2$.

The present paper is comprised of two additional sections. In Section~\ref{sec:inner} we establish Theorem~\ref{thm:inner1}, Theorem~\ref{thm:mnorm} and Theorem~\ref{thm:cex}. Section~\ref{sec:mhom} is devoted to the study of $m$-homogeneous symbols of Hankel operators and contains the proof of Theorem~\ref{thm:mhom} and Theorem~\ref{thm:Cdlower}.

\section{Symbols generated by inner functions} \label{sec:inner} 
In the proof of Theorem~\ref{thm:inner1} we will use the inner-outer factorization of functions in $H^p(\mathbb{T})$, for which our standard reference is Duren's monograph \cite[Ch.~2]{Duren70}. Every non-trivial function $f$ in $H^p(\mathbb{T})$ can be written as $f = I F$, where $I$ is inner and $F$ is outer. The factorization is unique up to a unimodular constant. In particular, it holds that $\|f\|_{H^p(\mathbb{T})}=\|F\|_{H^p(\mathbb{T})}$ and $F$ may be represented as
\begin{equation}\label{eq:outerrep} 
	F(z) = \exp\left(\int_0^{2\pi} \frac{e^{i\theta}+z}{e^{i\theta}-z}\log|f(e^{i\theta})|\,\frac{d\theta}{2\pi} \right). 
\end{equation}
We stress that $F$ does not vanish in the unit disc $\mathbb{D}$, so $\sqrt{F}$ will be analytic in $\mathbb{D}$. 
\begin{proof}[Proof of Theorem~\ref{thm:inner1}] 
	We explained in the introduction that if $\varphi = C I$ for a constant $C$ and an inner function $I$, then $\mathbf{H}_\varphi$ is easily seen to have minimal norm by \eqref{eq:lu}. Our job is therefore to establish the converse statement. Suppose therefore that $\varphi$ is a non-trivial element in $H^2(\mathbb{T})$ and that $\mathbf{H}_\varphi$ has minimal norm. Factor 
	\[\varphi = I \Phi,\]
	where $I$ is inner and $\Phi$ is outer, so that $\|\Phi\|_{H^2(\mathbb{T})}=\|\varphi\|_{H^2(\mathbb{T})}.$ Then $f = I \sqrt{\Phi}$ and $g = \sqrt{\Phi}$ satisfy
	\[\|f\|_{H^2(\mathbb{T})}=\|g\|_{H^2(\mathbb{T})}=\|\Phi\|_{H^1(\mathbb{T})}^{1/2}.\]
	By our assumption that $\mathbf{H}_\varphi$ has minimal norm, we find that
	\[\|\Phi\|_{H^2(\mathbb{T})}=\|\varphi\|_{H^2(\mathbb{T})} = \|\mathbf{H}_\varphi\| \geq \frac{\left|\left\langle \overline{P}(\overline{\varphi} f), \overline{g}\right\rangle\right|}{\|f\|_{H^2(\mathbb{T})}\|g\|_{H^2(\mathbb{T})}} = \frac{\|\Phi\|_{H^2(\mathbb{T})}^2}{\|\Phi\|_{H^1(\mathbb{T})}},\]
	where we used that $\overline{P}$ is self-adjoint and $\overline{P}(\overline{g})=\overline{g}$ in the final equality. This shows that $\|\Phi\|_{H^2(\mathbb{T})} \leq \|\Phi\|_{H^1(\mathbb{T})}$, which by the Cauchy--Schwarz inequality implies that there is some constant $C>0$ such that $|\Phi(e^{i\theta})|=C$ for almost every $e^{i\theta} \in \mathbb{T}$. By the representation \eqref{eq:outerrep} we conclude that $\varphi = C I$ (up to a unimodular constant). 
\end{proof}
\begin{remark}
	The proof of Theorem~\ref{thm:inner1} presented above is inspired by the modern proof of Nehari's theorem attributed to Helson (see \cite{Sarason11}). This argument exploits the inner-outer factorization to demonstrate that if $\mathbf{H}_\varphi$ is bounded, then $\varphi$ defines a bounded linear functional on $H^1(\mathbb{T})$ with $\|\varphi\|_{(H^1(\mathbb{T}))^\ast}=\|\mathbf{H}_\varphi\|$. The Hahn--Banach Theorem and the Riesz Representation Theorem can now be combined to show that there is some $\psi$ in $L^\infty(\mathbb{T})$ with $P\psi = \varphi$ and $\|\psi\|_{L^\infty(\mathbb{T})}=\|\varphi\|_{(H^1(\mathbb{T}))^\ast}$. 
\end{remark}

We require two preliminary results for the proof of Theorem~\ref{thm:mnorm}. The first is a special case of \cite[Lem.~2]{BP15}, which contains the corresponding result for all Schatten norms. A simpler proof of the present special case can be found in \cite[Lem.~4.4]{Sovik17}. 
\begin{lemma}\label{lem:hankelprod} 
	Suppose that $\varphi_1$ and $\varphi_2$ in $H^2(\mathbb{T}^d)$ depend on separate variables. If both $\mathbf{H}_{\varphi_1}$ and $\mathbf{H}_{\varphi_2}$ are bounded, then $\|\mathbf{H}_{\varphi_1\varphi_2}\| = \|\mathbf{H}_{\varphi_1}\| \|\mathbf{H}_{\varphi_2}\|$. 
\end{lemma}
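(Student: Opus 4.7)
The plan is to realize $\mathbf{H}_{\varphi_1\varphi_2}$ as the Hilbert space tensor product $\mathbf{H}_{\varphi_1}\otimes \mathbf{H}_{\varphi_2}$ and then invoke the standard identity $\|A\otimes B\|=\|A\|\,\|B\|$ for bounded operators between Hilbert spaces. After relabelling, I may assume that $\varphi_1$ depends only on $z'=(z_1,\ldots,z_{d_1})$ and $\varphi_2$ only on $z''=(z_{d_1+1},\ldots,z_d)$, with $d_1+d_2=d$. Writing Haar measure on $\mathbb{T}^d$ as a product of Haar measures on $\mathbb{T}^{d_1}$ and $\mathbb{T}^{d_2}$ and exploiting the factorization $\mathbb{N}_0^d=\mathbb{N}_0^{d_1}\times\mathbb{N}_0^{d_2}$ of the relevant Fourier supports, one obtains canonical unitary identifications
\[H^2(\mathbb{T}^d)=H^2(\mathbb{T}^{d_1})\otimes H^2(\mathbb{T}^{d_2}),\qquad \overline{H^2}(\mathbb{T}^d)=\overline{H^2}(\mathbb{T}^{d_1})\otimes \overline{H^2}(\mathbb{T}^{d_2}),\]
under which the anti-analytic projection factors as $\overline{P}=\overline{P}_1\otimes \overline{P}_2$, where $\overline{P}_j$ denotes the anti-analytic projection on $L^2(\mathbb{T}^{d_j})$.

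Next, I would check on an elementary tensor $f_1\otimes f_2$ with $f_j\in H^2(\mathbb{T}^{d_j})$ that
\[\mathbf{H}_{\varphi_1\varphi_2}(f_1\otimes f_2)=\overline{P}\bigl((\overline{\varphi_1}f_1)\otimes(\overline{\varphi_2}f_2)\bigr)=\bigl(\mathbf{H}_{\varphi_1}f_1\bigr)\otimes\bigl(\mathbf{H}_{\varphi_2}f_2\bigr).\]
The first equality uses the separate-variable hypothesis, which is precisely what lets multiplication by $\overline{\varphi_1\varphi_2}$ factor as multiplication by $\overline{\varphi_1}\otimes\overline{\varphi_2}$; the second uses the factorization of $\overline{P}$. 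Thus $\mathbf{H}_{\varphi_1\varphi_2}$ agrees with the algebraic tensor product $\mathbf{H}_{\varphi_1}\otimes\mathbf{H}_{\varphi_2}$ on the dense subspace of finite sums of elementary tensors, and by the hypothesis that both Hankel operators are bounded, the two operators coincide as bounded operators on $H^2(\mathbb{T}^d)$.

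Finally, I would appeal to the identity $\|A\otimes B\|=\|A\|\,\|B\|$. The bound $\geq$ is immediate: since $(A\otimes B)(f_1\otimes f_2)$ has norm $\|Af_1\|\,\|Bf_2\|$ on unit elementary tensors, both factors can be made arbitrarily close to $\|A\|$ and $\|B\|$. The bound $\leq$ follows from the factorization $A\otimes B=(A\otimes I)(I\otimes B)$ together with the observation that $\|A\otimes I\|=\|A\|$, which is seen by expanding an arbitrary $h\in H_1\otimes H_2$ as $h=\sum_j h_j\otimes e_j$ in an orthonormal basis of the passive coordinate and noting that $\|(A\otimes I)h\|^2=\sum_j\|Ah_j\|^2\leq \|A\|^2\sum_j\|h_j\|^2=\|A\|^2\|h\|^2$. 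The main obstacle is not conceptual but bookkeeping: one must verify carefully that the separate-variable assumption is indeed what enables the tensor factorization of both the multiplication operator by $\overline{\varphi_1\varphi_2}$ and the projection $\overline{P}$, both of which hinge on the product structure of $\mathbb{N}_0^d$.
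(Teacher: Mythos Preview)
Your tensor-product argument is correct. The paper does not actually prove this lemma: it simply cites \cite[Lem.~2]{BP15} for the general Schatten-class statement and \cite[Lem.~4.4]{Sovik17} for a simpler proof of the operator-norm case. The realization $\mathbf{H}_{\varphi_1\varphi_2}=\mathbf{H}_{\varphi_1}\otimes\mathbf{H}_{\varphi_2}$ under the canonical identification $H^2(\mathbb{T}^d)\cong H^2(\mathbb{T}^{d_1})\otimes H^2(\mathbb{T}^{d_2})$, followed by the standard identity $\|A\otimes B\|=\|A\|\,\|B\|$, is exactly the mechanism one expects behind those references, so your proof is in the same spirit as the literature the paper defers to. Your justification of both inequalities in $\|A\otimes B\|=\|A\|\,\|B\|$ is clean, and your remark that the separate-variable hypothesis is what makes both the multiplication and the projection factor is the right point of emphasis.
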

\begin{lemma}\label{lem:hankelsum} 
	Suppose that $\varphi_1$ and $\varphi_2$ in $H^2(\mathbb{T}^d)$ depend on separate variables and that $\varphi_1(0)=\varphi_2(0)=0$. If both $\mathbf{H}_{\varphi_1}$ and $\mathbf{H}_{\varphi_2}$ are bounded, then
	\[\|\mathbf{H}_{\varphi_1+\varphi_2}\|^2 \leq \|\mathbf{H}_{\varphi_1}\|^2 + \|\mathbf{H}_{\varphi_2}\|^2.\]
\end{lemma}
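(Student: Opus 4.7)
The plan is to split the variables as $\mathbb{T}^d = \mathbb{T}^{d_1}\times\mathbb{T}^{d_2}$ so that $\varphi_1(z) = \varphi_1(z')$ and $\varphi_2(z)=\varphi_2(z'')$, and to decompose each $f\in H^2(\mathbb{T}^d)$ orthogonally as $f = c + f_{10} + f_{01} + f_{11}$, where $c = f(0,0)$ is a constant, $f_{10}$ depends only on $z'$ with $f_{10}(0)=0$, $f_{01}$ depends only on $z''$ with $f_{01}(0)=0$, and $f_{11}$ lies in the orthogonal complement. Using the tensor factorisation $\overline{P} = \overline{P}_1\otimes\overline{P}_2$ we check that $\mathbf{H}_{\varphi_1}$ annihilates both $f_{01}$ and $f_{11}$ (the projection in the $z''$-variable kills their positive frequencies) and that
\[
\mathbf{H}_{\varphi_1} f = \mathbf{H}_{\varphi_1}^{(1)}\bigl(f(\cdot,0)\bigr),
\]
where $\mathbf{H}_{\varphi_1}^{(1)}$ denotes the Hankel operator on $H^2(\mathbb{T}^{d_1})$ with symbol $\varphi_1$ and the image is identified with a function on $\mathbb{T}^d$ that is constant in $z''$. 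This yields $\|\mathbf{H}_{\varphi_1} f\|^2 \leq \|\mathbf{H}_{\varphi_1}\|^2(|c|^2 + \|f_{10}\|^2)$, and symmetrically $\|\mathbf{H}_{\varphi_2} f\|^2 \leq \|\mathbf{H}_{\varphi_2}\|^2(|c|^2+\|f_{01}\|^2)$.

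Next we expand
\[
\|\mathbf{H}_{\varphi_1+\varphi_2} f\|^2 = \|\mathbf{H}_{\varphi_1} f\|^2 + \|\mathbf{H}_{\varphi_2} f\|^2 + 2\,\mathrm{Re}\,\langle\mathbf{H}_{\varphi_1} f,\mathbf{H}_{\varphi_2} f\rangle
\]
and bound the cross term. Since $\mathbf{H}_{\varphi_1} f$ is constant in $z''$ and $\mathbf{H}_{\varphi_2} f$ is constant in $z'$, Fubini collapses their inner product to the product of the constant Fourier coefficients $a_j := \widehat{\mathbf{H}_{\varphi_j} f}(0) = \langle f,\varphi_j\rangle$. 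Here is the decisive use of $\varphi_j(0)=0$: it forces $a_1 = \langle f_{10},\varphi_1\rangle$ and $a_2 = \langle f_{01},\varphi_2\rangle$, so that Cauchy--Schwarz combined with the lower estimate $\|\varphi_j\|\leq\|\mathbf{H}_{\varphi_j}\|$ from \eqref{eq:lu} gives $|a_1|\leq \|\mathbf{H}_{\varphi_1}\|\|f_{10}\|$ and $|a_2|\leq\|\mathbf{H}_{\varphi_2}\|\|f_{01}\|$. A cross-paired AM--GM step then produces
\[
2|a_1||a_2|\leq \|\mathbf{H}_{\varphi_1}\|^2\|f_{01}\|^2 + \|\mathbf{H}_{\varphi_2}\|^2\|f_{10}\|^2,
\]
and substituting everything back regroups the total as $(\|\mathbf{H}_{\varphi_1}\|^2+\|\mathbf{H}_{\varphi_2}\|^2)(|c|^2+\|f_{10}\|^2+\|f_{01}\|^2)$, which is bounded by $(\|\mathbf{H}_{\varphi_1}\|^2+\|\mathbf{H}_{\varphi_2}\|^2)\|f\|^2$ because $\|f_{11}\|^2\geq 0$.

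The main obstacle is that the ranges of $\mathbf{H}_{\varphi_1}$ and $\mathbf{H}_{\varphi_2}$ are \emph{not} orthogonal: they overlap on the constants, so Pythagoras alone does not dispose of the cross term. The rescue is that, thanks to $\varphi_j(0)=0$, the cross term depends only on the one-variable components $f_{10}$ and $f_{01}$ of $f$ -- precisely the components that produce the slack in the two individual norm estimates -- and the cross-paired AM--GM inequality absorbs the cross term exactly against that slack.
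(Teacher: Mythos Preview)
Your argument is correct. It differs from the paper's in the choice of orthogonal decomposition of $f$: the paper projects $f$ onto the two-dimensional span of $\varphi_1,\varphi_2$ and its orthogonal complement $g$, then asserts an orthogonal splitting of $\mathbf{H}_{\varphi_1+\varphi_2}f$ into a constant piece $t_1\|\varphi_1\|+t_2\|\varphi_2\|$ and the two pieces $t_3\mathbf{H}_{\varphi_j}g$, applying Cauchy--Schwarz to the constant piece. You instead split $f$ along the four blocks of the tensor decomposition $H^2(\mathbb{T}^{d_1})\otimes H^2(\mathbb{T}^{d_2})$ (constant/non-constant in each factor), which makes it transparent that $\mathbf{H}_{\varphi_j}$ only acts on the part of $f$ that is constant in the other variable group and has range constant in that group; the cross term then collapses to $a_1\overline{a_2}$ and is absorbed by the cross-paired AM--GM. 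Both routes hinge on the same two ingredients---the vanishing $\varphi_j(0)=0$ to control the overlap on constants, and the lower bound $\|\varphi_j\|\leq\|\mathbf{H}_{\varphi_j}\|$ from \eqref{eq:lu}---but your decomposition is more structural (it depends only on the splitting of variables, not on the particular symbols) and makes the role of the cross term completely explicit, while the paper's decomposition is leaner to write down once the orthogonality claims are accepted.
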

\begin{proof}
	To avoid trivialities, we assume that $\|\varphi_j\|_{H^2(\mathbb{T}^d)}\neq0$ for $j=1,2$. Every $f$ in $H^2(\mathbb{T}^d)$ with $\|f\|_{H^2(\mathbb{T}^d)}=1$ can be orthogonally decomposed as
	\[f = \frac{t_1}{\|\varphi_1\|_{H^2(\mathbb{T}^d)}} \varphi_1 + \frac{t_2}{\|\varphi_2\|_{H^2(\mathbb{T}^d)}} \varphi_2 + t_3 g.\]
	Here $t_1,t_2,t_3$ are complex numbers satisfying $|t_1|^2+|t_2|^2+|t_3|^2=1$ and $g$ is a function which is orthogonal to both $\varphi_1$ and $\varphi_2$ and which satisfies $\|g\|_{H^2(\mathbb{T}^d)}=1$. A direct computation based on \eqref{eq:hankel} shows that
	\[\mathbf{H}_{\varphi_j} f = t_j \|\varphi_j\|_{H^2(\mathbb{T}^d)} + t_3 \mathbf{H}_{\varphi_j} g,\]
	for $j=1,2$, since $\varphi_1$ and $\varphi_2$ depend on separate variables and $\varphi_1(0)=\varphi_2(0)=0$. We now have the orthogonal decomposition
	\[\mathbf{H}_{\varphi_1+\varphi_2} f = \big(t_1 \|\varphi_1\|_{H^2(\mathbb{T}^d)}+t_2\|\varphi_2\|_{H^2(\mathbb{T}^d)}\big)+t_3\mathbf{H}_{\varphi_1}g + t_3\mathbf{H}_{\varphi_2} g.\]
	By orthogonality and the fact that $\|g\|_{H^2(\mathbb{T}^d)}=1$, we get
	\[\|\mathbf{H}_{\varphi_1+\varphi_2} f\|_{H^2(\mathbb{T}^d)}^2 \leq \big|t_1 \|\varphi_1\|_{H^2(\mathbb{T}^d)}+t_2\|\varphi_2\|_{H^2(\mathbb{T}^d)}\big|^2 + |t_3|^2 \big(\|\mathbf{H}_{\varphi_1}\|^2 + \|\mathbf{H}_{\varphi_2}\|^2\big).\]
	Using the Cauchy--Schwarz inequality on the first term and exploiting the general lower bound $\|\varphi_j\|_{H^2(\mathbb{T}^d)}\leq\|\mathbf{H}_{\varphi_j}\|$ from \eqref{eq:lu} for $j=1,2$, we get
	\[\|\mathbf{H}_{\varphi_1+\varphi_2} f\|_{H^2(\mathbb{T}^d)}^2 \leq (|t_1|^2+|t_2|^2+|t_3|^2)\big(\|\mathbf{H}_{\varphi_1}\|^2 + \|\mathbf{H}_{\varphi_2}\|^2\big) = \|\mathbf{H}_{\varphi_1}\|^2 + \|\mathbf{H}_{\varphi_2}\|^2.\]
	This completes the proof since $f$ is an arbitrary norm $1$ element in $H^2(\mathbb{T}^d)$. 
\end{proof}

\begin{proof}
	[Proof of Theorem~\ref{thm:mnorm}] We begin with (a), where Lemma~\ref{lem:hankelprod} and the assumption that $\mathbf{H}_{\varphi_1}$ and $\mathbf{H}_{\varphi_2}$ have minimal norm imply that
	\[\|\mathbf{H}_{\varphi_1\varphi_2}\| = \|\mathbf{H}_{\varphi_1}\| \|\mathbf{H}_{\varphi_2}\| = \|\varphi_1\|_{H^2(\mathbb{T}^d)} \|\varphi_2\|_{H^2(\mathbb{T}^d)} = \|\varphi_1 \varphi_2\|_{H^2(\mathbb{T}^d)}.\]
	The final equality is a trivial consequence of the fact that $\varphi_1$ and $\varphi_2$ depend on separate variables. Hence $\mathbf{H}_{\varphi_1\varphi_2}$ is has minimal norm. In the case (b), we similarly get from Lemma~\ref{lem:hankelsum} and the assumption that $\mathbf{H}_{\varphi_1}$ and $\mathbf{H}_{\varphi_2}$ have minimal norm that
	\[\|\mathbf{H}_{\varphi_1+\varphi_2}\|^2 \leq \|\mathbf{H}_{\varphi_1}\|^2 + \|\mathbf{H}_{\varphi_2}\|^2 = \|\varphi_1\|_{H^2(\mathbb{T}^d)}^2 +\|\varphi_2\|_{H^2(\mathbb{T}^d)}^2 = \|\varphi_1+\varphi_2\|_{H^2(\mathbb{T}^d)}^2.\]
	The final equality holds because $\varphi_1 \perp \varphi_2$. Hence $\mathbf{H}_{\varphi_1+\varphi_2}$ has minimal norm. 
\end{proof}

We require following estimate in the proof of the second part of Theorem~\ref{thm:cex}. 
\begin{lemma}\label{lem:plower} 
	Suppose that $1 \leq q \leq 2$. Then
	\[\left\|\frac{z_1+z_2}{\sqrt{2}}\right\|_{H^q(\mathbb{T}^2)}^{-1} \geq 1 + \frac{2\log{2}-1}{8}(2-q).\]
\end{lemma}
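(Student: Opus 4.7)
My plan is to reduce the claim to an explicit evaluation of $\|(z_1+z_2)/\sqrt{2}\|_{H^1(\mathbb{T}^2)}$ by log-convex interpolation of $L^p$-norms, linearize the resulting exponential using $e^u\geq 1+u$, and finish with a single numerical check.

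First, by log-convexity of $L^p$-norms (equivalently, Hölder's inequality applied to the splitting $|f|^q=|f|^{q(1-t)}|f|^{qt}$), with $1/q=(1-t)/2+t$, so that $t=(2-q)/q\in[0,1]$ for $q\in[1,2]$, one obtains
\[\left\|\frac{z_1+z_2}{\sqrt{2}}\right\|_{H^q(\mathbb{T}^2)}\leq \left\|\frac{z_1+z_2}{\sqrt{2}}\right\|_{H^2(\mathbb{T}^2)}^{1-t}\left\|\frac{z_1+z_2}{\sqrt{2}}\right\|_{H^1(\mathbb{T}^2)}^{t}.\]
The $H^2$-norm equals $1$ by orthogonality. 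For the $H^1$-norm, the identity $|z_1+z_2|^2 = 2+2\operatorname{Re}(z_1\overline{z_2})$ shows that $|(z_1+z_2)/\sqrt{2}|$ has the same distribution as $\sqrt{2}\,|\cos(\vartheta/2)|$ for $\vartheta$ uniform on $\mathbb{T}$, so direct integration gives $\|(z_1+z_2)/\sqrt{2}\|_{H^1(\mathbb{T}^2)}=2\sqrt{2}/\pi$. Taking reciprocals then yields
\[\left\|\frac{z_1+z_2}{\sqrt{2}}\right\|_{H^q(\mathbb{T}^2)}^{-1}\geq \left(\frac{\pi}{2\sqrt{2}}\right)^{(2-q)/q}.\]

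Second, since $\log(\pi/(2\sqrt{2}))\geq 0$, the elementary bound $e^u\geq 1+u$ combined with $1/q\geq 1/2$ on $[1,2]$ gives
\[\left(\frac{\pi}{2\sqrt{2}}\right)^{(2-q)/q}\geq 1+\frac{2-q}{q}\log\frac{\pi}{2\sqrt{2}}\geq 1+\frac{2-q}{2}\log\frac{\pi}{2\sqrt{2}}.\]
The lemma is therefore reduced to the purely numerical inequality $\tfrac{1}{2}\log(\pi/(2\sqrt{2}))\geq (2\log 2-1)/8$, which is equivalent to $4\log\pi-6\log 2\geq 2\log 2-1$, i.e., $e\pi^4\geq 2^8=256$.

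The main obstacle is really only this final numerical fact. It is not tight in a dramatic sense ($e\pi^4\approx 264.8$), but both $\pi$ and $e$ enter, so one should not use overly crude rational lower bounds. For instance, $\pi>3.14$ gives $\pi^4>97.21$, and combined with $e>2.71$ one obtains $e\pi^4>263>256$, which closes the argument. No further analytic maneuvering is required: the structural part of the estimate is forced, in the sense that the chord bound from $L^2$ to $L^1$ is the unique log-convex interpolation available on $[1,2]$ that returns the reciprocal of $\|(z_1+z_2)/\sqrt{2}\|_{H^1(\mathbb{T}^2)}$, and the constant $(2\log 2-1)/8$ in the statement sits comfortably below the natural slope $\tfrac{1}{2}\log(\pi/(2\sqrt{2}))\approx 0.0524$ produced by this linearization.
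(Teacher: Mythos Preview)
Your argument is correct and self-contained, but it follows a genuinely different route from the paper. The paper imports from \cite{BBHOCP19} the inequality
\[
\left\|\frac{z_1+z_2}{\sqrt{2}}\right\|_{H^q(\mathbb{T}^2)}^{-1} \geq \frac{1}{\sqrt{2}}\left(1+\frac{q}{2}\right)^{1/q}
\]
and then linearizes at $q=2$ via Taylor's theorem; the derivative of the right-hand side at $q=2$ is exactly $-(2\log 2-1)/8$, which explains the specific constant in the statement. You instead interpolate by log-convexity between the exact endpoint values at $q=1$ and $q=2$ (computing $\|(z_1+z_2)/\sqrt{2}\|_{H^1(\mathbb{T}^2)}=2\sqrt{2}/\pi$ directly), and then linearize with $e^u\geq 1+u$. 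Your slope at $q=2$ is $\tfrac{1}{2}\log(\pi/(2\sqrt{2}))\approx 0.0524$, strictly larger than $(2\log 2-1)/8\approx 0.0483$, which is why you are left with the residual numerical check $e\pi^4\geq 256$. The trade-off: the paper's proof is two lines but leans on an outside reference and a convexity/Taylor remainder that is only implicit; yours is longer but uses nothing beyond H\"older's inequality, an explicit integral, and elementary numerics, and in fact yields a slightly stronger bound over the whole range $1\leq q\leq 2$.
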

\begin{proof}
	We first extract from the proof of \cite[Lem.~21]{BBHOCP19} the estimate
	\[\|\varphi\|_{H^q(\mathbb{T}^2)}^{-1} \geq \frac{1}{\sqrt{2}} \left(1+\frac{q}{2}\right)^{\frac{1}{q}}.\]
	The proof is completed by using Taylor's theorem at $q=2$. 
\end{proof}
\begin{proof}
	[Proof of Theorem~\ref{thm:cex}] For every positive integer $k$, let
	\[\varphi_k(z) = \prod_{j=(k-1)k/2+1}^{k(k+1)/2} \frac{z_{2j-1}+z_{2j}}{\sqrt{2}}\]
	and note that $\|\varphi_k\|_{H^2(\mathbb{T}^\infty)}=1$. By the recipe outlined after Theorem~\ref{thm:mnorm}, it is clear that $\|\mathbf{H}_\varphi\|=\|\varphi\|_{H^2(\mathbb{T}^\infty)}=1$ if
	\[\varphi(z) = \frac{\sqrt{6}}{\pi} \sum_{k=1}^\infty \frac{\varphi_k(z)}{k}.\]
	It remains to establish the second claim, where we shall argue by contradiction. Fix $2<p\leq\infty$ and assume that there is some $\psi$ in $L^p(\mathbb{T}^\infty)$ such that $P\psi = \varphi$. Since $P$ is self-adjoint, we get from H\"older's inequality that 
	\begin{equation}\label{eq:contradictme} 
		\frac{|\langle f,\varphi \rangle |}{\|f\|_{H^q(\mathbb{T}^\infty)}} \leq \|\psi\|_{L^p(\mathbb{T}^\infty)}<\infty 
	\end{equation}
	for every non-trivial function $f$ in $H^q(\mathbb{T}^\infty)$, where $q = p/(p-1)$. The fact that $2<p\leq\infty$ means that $1 \leq q <2$. Choosing $f = \varphi_k$, we see that
	\[\lim_{k \to \infty} \frac{\langle \varphi_k, \varphi \rangle}{\|\varphi_k\|_{H^q(\mathbb{T}^\infty)}} = \frac{\sqrt{6}}{\pi}\lim_{k \to \infty} \frac{1}{k} \frac{\|\varphi_k\|_{H^2(\mathbb{T}^\infty)}^2}{\|\varphi_k\|_{H^q(\mathbb{T}^\infty)}} = \frac{\sqrt{6}}{\pi} \lim_{k \to \infty} \frac{1}{k}\frac{1}{\|\varphi_1\|_{H^q(\mathbb{T}^2)}^k} = \infty,\]
	by Lemma~\ref{lem:plower}. This contradicts \eqref{eq:contradictme} and hence our assumption that there is some $\psi$ in $L^q(\mathbb{T}^\infty)$ with $P\psi = \varphi$ must be wrong. 
\end{proof}

\section{\texorpdfstring{$m$}{m}-homogeneous symbols} \label{sec:mhom} 
To prepare for the proof of Theorem~\ref{thm:mhom}, we first orthogonally decompose $H^2(\mathbb{T}^d)$ and $\overline{H^2}(\mathbb{T}^d)$ using $m$-homogeneous functions. It is clear that 
\begin{equation}\label{eq:decomp} 
	H^2(\mathbb{T}^d) = \bigoplus_{m=0}^\infty H^2_m(\mathbb{T}^d) \qquad\text{and} \qquad \overline{H^2}(\mathbb{T}^d) = \bigoplus_{m=0}^\infty \overline{H^2_m}(\mathbb{T}^d). 
\end{equation}
Note that the functions in $\overline{H^2_m}(\mathbb{T}^d)$ are $-m$-homogeneous, since they are precisely the complex conjugates of functions from $H^2_m(\mathbb{T}^d)$. 

\begin{proof}[Proof of Theorem~\ref{thm:mhom}] 
	To establish (a), decompose a function $f$ in $H^2(\mathbb{T}^d)$ as
	\[f = \sum_{k=0}^\infty f_k\]
	in view of \eqref{eq:decomp}. By assumption, our symbol $\varphi$ is $m$-homogeneous. Hence we have
	\[\overline{\varphi} f = \sum_{k=0}^\infty \overline{\varphi} f_k,\]
	and $\overline{\varphi} f_k$ is $(-m+k)$-homogeneous. Since homogenity is preserved under $\overline{P}$, this shows that $\mathbf{H}_\varphi$ maps $H_k^2(\mathbb{T}^d)$ to $\overline{H^2_{m-k}}(\mathbb{T}^d)$ when $0 \leq k \leq m$. This completes the proof of the first claim. If $k \geq m+1$, then $\overline{\varphi}f_k$ has positive homogeneity and hence $\overline{P}(\overline{\varphi} f_k)=0$. Combining what we have done with \eqref{eq:decomp} shows that $\mathbf{H}_\varphi$ enjoys the stated orthogonal decomposition 
	\begin{equation}\label{eq:adecomp} 
		\mathbf{H}_\varphi = \left(\bigoplus_{k=0}^m \mathbf{H}_{\varphi,k}\right)\oplus \mathbf{0}. 
	\end{equation}
	For the proof of (b), we first check that if $f$ is in $H^2(\mathbb{T}^d)$ and is $g$ in $\overline{H^2}(\mathbb{T}^d)$, then
	\[\langle \mathbf{H}_\varphi f, g \rangle = \langle \overline{P}(\overline{\varphi}f), g \rangle = \langle f, \varphi g \rangle = \langle f, P(\varphi g) \rangle,\]
	which shows that $\mathbf{H}_\varphi^\ast g = P(\varphi g)$. This also shows that $\mathbf{H}_\varphi^\ast$ is unitarily equivalent to the Hankel operator $\mathbf{H}_{\widetilde{\varphi}}$ where $\widetilde{\varphi}(z)=\overline{\varphi(\overline{z})}$. The decomposition \eqref{eq:adecomp} therefore applies to $\mathbf{H}_{\widetilde{\varphi}}$, which means that $\mathbf{H}_{\varphi,k}^\ast$ must be unitarily equivalent to $\mathbf{H}_{\widetilde{\varphi},k-m}$. 
\end{proof}

The following result illustrates how Theorem~\ref{thm:mhom} pertains to minimal norm Hankel operators. Part (a) allows us to focus on the restricted Hankel operators and part~(b) reduces the number of restricted Hankel operators we need to consider.
\begin{corollary}\label{cor:decomplower} 
	Let $\varphi$ be in $H^2_m(\mathbb{T}^d)$. Then $\mathbf{H}_\varphi$ has minimal norm if and only if 
	\begin{equation}\label{eq:decomplower} 
		\max_{0<k\leq \lfloor m/2 \rfloor} \|\mathbf{H}_{\varphi,k} \| \leq \|\varphi\|_{H^2(\mathbb{T}^d)} 
	\end{equation}
	where $\mathbf{H}_{\varphi,k}$ denotes the restriction of $\mathbf{H}_\varphi$ to $H^2_k(\mathbb{T}^d)$. 
\end{corollary}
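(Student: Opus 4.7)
The plan is first to use Theorem~\ref{thm:mhom}(a) to express $\|\mathbf{H}_\varphi\|$ as $\max_{0\leq k\leq m}\|\mathbf{H}_{\varphi,k}\|$; since $\|\mathbf{H}_\varphi\|\geq\|\varphi\|_{H^2(\mathbb{T}^d)}$ by \eqref{eq:lu}, the minimal-norm condition becomes $\|\mathbf{H}_{\varphi,k}\| \leq \|\varphi\|_{H^2(\mathbb{T}^d)}$ for every $0\leq k \leq m$. The remaining task is to discard the indices $k=0$, $k=m$, and $\lfloor m/2\rfloor<k<m$, showing that the corresponding inequalities either hold automatically or are already subsumed by \eqref{eq:decomplower}.

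The index $k=0$ is immediate: $H^2_0(\mathbb{T}^d)$ consists of constants, and since $\overline\varphi \in \overline{H^2}(\mathbb{T}^d)$, one has $\mathbf{H}_{\varphi,0}(c) = \overline{P}(c\overline\varphi) = c\overline\varphi$, so $\|\mathbf{H}_{\varphi,0}\| = \|\varphi\|_{H^2(\mathbb{T}^d)}$ with equality.

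The crucial step is the symmetry $\|\mathbf{H}_{\varphi,k}\| = \|\mathbf{H}_{\varphi,m-k}\|$ for every $0\leq k \leq m$, which I would establish by unfolding \eqref{eq:hankel} to identify $\|\mathbf{H}_{\varphi,k}\|$ with the norm of the bilinear form $(f,g)\mapsto\langle fg,\varphi\rangle$ on $H^2_k(\mathbb{T}^d)\times H^2_{m-k}(\mathbb{T}^d)$; this norm is manifestly invariant under swapping $f$ and $g$, which interchanges $k$ and $m-k$. An alternative route exploits Theorem~\ref{thm:mhom}(b), giving $\|\mathbf{H}_{\varphi,k}\| = \|\mathbf{H}_{\widetilde\varphi,m-k}\|$, together with the observation that the antilinear isometry $f(z)\mapsto\overline{f(\overline z)}$ preserves the norms of Hankel operators while sending $\mathbf{H}_\varphi$-type to $\mathbf{H}_{\widetilde\varphi}$-type ones.

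With this symmetry in hand, $k=m$ pairs with $k=0$ and is therefore automatic, while any $\lfloor m/2\rfloor<k<m$ satisfies $0<m-k\leq\lfloor m/2\rfloor$, so the constraint at $k$ is already part of \eqref{eq:decomplower}. I foresee no serious obstacle; the only delicate point is the $k\leftrightarrow m-k$ symmetry, and the bilinear-form derivation makes it practically tautological.
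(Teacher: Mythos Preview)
Your proposal is correct and follows essentially the same route as the paper: reduce to $\max_{0\leq k\leq m}\|\mathbf{H}_{\varphi,k}\|$ via Theorem~\ref{thm:mhom}(a), dispose of $k=0$ because $H^2_0(\mathbb{T}^d)$ is the constants, and exploit the symmetry $\|\mathbf{H}_{\varphi,k}\|=\|\mathbf{H}_{\varphi,m-k}\|$ to halve the range. The paper obtains that symmetry via Theorem~\ref{thm:mhom}(b) and the chain $\|\mathbf{H}_{\varphi,k}\|=\|\mathbf{H}_{\varphi,k}^\ast\|=\|\mathbf{H}_{\widetilde\varphi,m-k}\|=\|\mathbf{H}_{\varphi,m-k}\|$, which is exactly your ``alternative route''; your primary bilinear-form argument $\langle \mathbf{H}_{\varphi,k} f,\overline g\rangle=\langle fg,\varphi\rangle$ is a slightly more direct way to the same conclusion.
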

\begin{proof}
	It is clear from Theorem~\ref{thm:mhom} (a) that $\mathbf{H}_\varphi$ has minimal norm if and only if 
	\begin{equation}\label{eq:decreaseme} 
		\max_{0 \leq k \leq m} \|\mathbf{H}_{\varphi,k} \| \leq \|\varphi\|_{H^2(\mathbb{T}^d)}, 
	\end{equation}
	so our goal is to demonstrate that the set we take the maxima over may be decreased to obtain \eqref{eq:decomplower}. From Theorem~\ref{thm:mhom} (b) we find that
	\[\|\mathbf{H}_{\varphi,k}\| = \|\mathbf{H}_{\varphi,k}^\ast\|=\|\mathbf{H}_{\widetilde{\varphi},m-k}\|=\|\mathbf{H}_{\varphi,m-k}\|\]
	for $0 \leq k \leq m$, which allows us to decrease the set in \eqref{eq:decreaseme} to $0 \leq k \leq \lfloor m/2 \rfloor$. It remains to exclude the case $k=0$. Since $H^2_0(\mathbb{T}^d)$ is comprised of constant functions, it follows at once from the definition of $\mathbf{H}_\varphi$ that $\|\mathbf{H}_{\varphi,0}\|=\|\varphi\|_{H^2(\mathbb{T}^d)}$. Hence the desired inequality is automatically satisfied for $k=0$. 
\end{proof}
\begin{remark}
	Since the maximum in \eqref{eq:decomplower} is taken over an empty set of integers if $m=1$, Corollary~\ref{cor:decomplower} ensures that 
	\begin{equation}\label{eq:1hom} 
		\varphi_1(z) = \sum_{j=1}^d c_j z_j 
	\end{equation}
	generates a minimal norm Hankel operator for any choice of coefficients. This can also be seen from the recipe inspired by Theorem~\ref{thm:mnorm}. The $1$-homogeneous symbols \eqref{eq:1hom} are used in \cite{BP15} to extend the result of \cite{OCS12} to certain Schatten classes. 
\end{remark}

We can put Corollary~\ref{cor:decomplower} to use and easily obtain the following concrete examples. It is clear that if $a$ and $b$ are positive, then we cannot construct the polynomials $\varphi_2$ and $\varphi_3$ using the recipe inspired by Theorem~\ref{thm:mnorm}. 
\begin{theorem}\label{thm:twoguys} 
	Consider the polynomials
	\[\varphi_2(z) = z_1^2+ a z_1 z_2 + z_2^2 \qquad \text{and}\qquad \varphi_3(z) = z_1^3 + b z_1^2 z_2 + b z_1 z_2^2 + z_2^3\]
	where $a$ and $b$ are nonnegative real numbers. The Hankel operator 
	\begin{enumerate}
		\item[(a)] $\mathbf{H}_{\varphi_2}$ has minimal norm if and only if $a\leq 1/2$, 
		\item[(b)] $\mathbf{H}_{\varphi_3}$ has minimal norm if and only if $b \leq \sqrt{2}-1$. 
	\end{enumerate}
\end{theorem}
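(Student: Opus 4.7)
The plan is to apply Corollary~\ref{cor:decomplower}. Since $\varphi_2$ is $2$-homogeneous and $\varphi_3$ is $3$-homogeneous, in both cases $\lfloor m/2 \rfloor = 1$, so the minimal norm condition reduces to the single inequality $\|\mathbf{H}_{\varphi_j, 1}\| \leq \|\varphi_j\|_{H^2(\mathbb{T}^2)}$, where $\mathbf{H}_{\varphi_j, 1}$ denotes the restriction of $\mathbf{H}_{\varphi_j}$ to $H^2_1(\mathbb{T}^2)$, which is two-dimensional with orthonormal basis $\{z_1, z_2\}$. In each case the restricted Hankel operator acts between finite-dimensional spaces equipped with natural orthonormal monomial bases, so computing its norm amounts to diagonalizing a small matrix.

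For (a), I would compute the action of $\mathbf{H}_{\varphi_2, 1}$ on $z_1$ and $z_2$ directly from \eqref{eq:hankel}, using that any term in $\overline{\varphi_2}\, z_j$ with a strictly positive Fourier index in some variable is killed by $\overline{P}$. Only the ``diagonal'' contributions survive, leaving the symmetric $2 \times 2$ matrix with diagonal entries $1$ and off-diagonal entries $a$ as the representation with respect to $\{z_1, z_2\}$ and $\{\overline{z_1}, \overline{z_2}\}$; its operator norm is $1+a$. Comparison with $\|\varphi_2\|_{H^2(\mathbb{T}^2)}^2 = 2 + a^2$ then reduces Corollary~\ref{cor:decomplower} to the inequality $(1+a)^2 \leq 2 + a^2$, equivalent to $a \leq 1/2$.

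For (b), the same procedure yields a $3 \times 2$ matrix $M$ for $\mathbf{H}_{\varphi_3, 1}$ in the bases $\{z_1, z_2\}$ and $\{\overline{z_1}^2, \overline{z_1 z_2}, \overline{z_2}^2\}$, whose two columns each have one entry equal to $1$ (at opposite ends) and the other two entries equal to $b$. Then $M^\ast M$ is a $2 \times 2$ self-adjoint matrix with equal diagonal entries, so its largest eigenvalue is the sum of a diagonal and an off-diagonal entry, which works out to $1 + 2b + 3b^2$. Comparing with $\|\varphi_3\|_{H^2(\mathbb{T}^2)}^2 = 2(1+b^2)$ reduces the criterion to the quadratic inequality $b^2 + 2b - 1 \leq 0$, equivalent for $b \geq 0$ to $b \leq \sqrt{2} - 1$.

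The main point is the reduction from an infinite-dimensional operator norm to a $2 \times 2$ eigenvalue problem via Corollary~\ref{cor:decomplower}; I do not foresee a real obstacle. The only mildly delicate point is to correctly track which summands of $\overline{\varphi_j}\, z_k$ are annihilated by $\overline{P}$—namely those with a positive Fourier index in some variable—which is what keeps the relevant matrices as simple as described.
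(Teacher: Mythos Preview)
Your proposal is correct and follows essentially the same route as the paper: both proofs invoke Corollary~\ref{cor:decomplower} to reduce to the single restriction $\mathbf{H}_{\varphi_j,1}$, write down the identical $2\times 2$ and $3\times 2$ matrix representations, and finish with the same eigenvalue computations and resulting inequalities $(1+a)^2\leq 2+a^2$ and $1+2b+3b^2\leq 2+2b^2$. There is no substantive difference in method or in the details.
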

\begin{proof}
	[Proof of Theorem~\ref{thm:twoguys} (a)] The function $\varphi_2$ is $2$-homogeneous. By Corollary~\ref{cor:decomplower}, it is sufficient to check which coefficients $a$ ensure that the inequality 
	\begin{equation}\label{eq:satisfya} 
		\|\mathbf{H}_{\varphi_2,1}\| \leq \sqrt{2+a^2} 
	\end{equation}
	is satisfied. The matrix representation of the operator $\mathbf{H}_{\varphi_2,1}\colon H^2_1(\mathbb{T}^2) \to \overline{H^2_1}(\mathbb{T}^2)$ with respect to the standard basis is
	\[M_{\varphi_2,1} = 
	\begin{pmatrix}
		1 & a \\
		a & 1 
	\end{pmatrix}
	.\]
	The norm of this matrix is seen to be $1+a$, since $a\geq0$ by assumption. The requirement \eqref{eq:satisfya} becomes $1+a \leq \sqrt{2+a^2}$, which simplifies to $a\leq 1/2$. 
\end{proof}
\begin{proof}
	[Proof of Theorem~\ref{thm:twoguys} (b)] The function $\varphi_3$ is $3$-homogeneous. By Corollary~\ref{cor:decomplower}, it is sufficient to check which nonnegative coefficients $b$ ensure that 
	\begin{equation}\label{eq:satisfyb} 
		\|\mathbf{H}_{\varphi_3,1}\| \leq \sqrt{2+2b^2}. 
	\end{equation}
	The matrix representation of the operator $\mathbf{H}_{\varphi_3,1}\colon H^2_1(\mathbb{T}^2) \to \overline{H^2_2}(\mathbb{T}^2)$ with respect to the standard basis is
	\[M_{\varphi_3,1} = 
	\begin{pmatrix}
		1 & b \\
		b & b \\
		b & 1 
	\end{pmatrix}
	\qquad\text{and hence} \qquad M_{\varphi_3,1}^\ast M_{\varphi_3,1} = 
	\begin{pmatrix}
		1+2b^2 & 2b+b^2 \\
		2b+b^2 & 1+2b^2 
	\end{pmatrix}
	.\]
	Since $b\geq0$ it is easy to see that the norm of the latter matrix is $1+2b+3b^2$. The requirement \eqref{eq:satisfyb} becomes $1+2b+3b^2 \leq 2+2b^2$, which simplifies to $b \leq \sqrt{2}-1$. 
\end{proof}
\begin{proof}
	[Proof of Theorem~\ref{thm:Cdlower}] A simple argument based on Lemma~\ref{lem:hankelprod} shows that if $d$ is an even integer, then $C_d \geq C_2^{d/2}$. It is therefore sufficient to establish that 
	\begin{equation}\label{eq:C2} 
		C_2 \geq \frac{5\pi}{\pi+6\sqrt{3}}. 
	\end{equation}
	Starting from the definition of $C_2$ from \eqref{eq:Cd} and arguing as in the proof of the second part of Theorem~\ref{thm:cex}, we get that 
	\begin{equation}\label{eq:duallower} 
		C_2 \geq \frac{|\langle f, \varphi \rangle|}{\|\mathbf{H}_\varphi\| \, \|f\|_{H^1(\mathbb{T}^2)}} 
	\end{equation}
	for any pair of non-trivial functions $f$ in $H^1(\mathbb{T}^2)$ and $\varphi$ in $H^2(\mathbb{T}^d)$. We will choose
	\[f(z)= z_1^2 + z_1z_2+z_2^2 \qquad \text{and} \qquad \varphi(z) = z_1^2+\frac{z_1 z_2}{2}+z_2^2.\]
	Clearly $\langle f, \varphi \rangle = 5/2$ and by Theorem~\ref{thm:twoguys} (a) we know that $\|\mathbf{H}_\varphi\|=\|\varphi\|_{H^2(\mathbb{T}^2)}=3/2$. Since the coefficients of $f$ are real and since $f$ is $2$-homogeneous, we can simplify
	\[\|f\|_{H^1(\mathbb{T}^2)} = \int_0^{2\pi}\int_0^{2\pi} \big|f(e^{i\theta_1},e^{i\theta_2})\big|\,\frac{d\theta_1}{2\pi}\frac{d\theta_2}{2\pi} = \int_0^\pi \big|e^{i\theta}+1+e^{-i\theta}\big|\,\frac{d\theta}{\pi}.\]
	Using that $e^{i\theta}+e^{-i\theta}=2\cos{\theta}$ and that the solution to the equation $2\cos{\theta}+1=0$ on the interval $0\leq \theta \leq \pi$ is $\theta = 2\pi/3$, we find that
	\[\|f\|_{H^1(\mathbb{T}^2)} = \int_0^{2\pi/3} (2\cos{\theta}+1)\,\frac{d\theta}{\pi}-\int_{2\pi/3}^\pi (2\cos{\theta}+1)\,\frac{d\theta}{\pi} = \frac{2}{3}+\frac{\sqrt{3}}{\pi}-\left(\frac{1}{3}-\frac{\sqrt{3}}{\pi}\right).\]
	Inserting everything into \eqref{eq:duallower} and tidying up yields the stated lower bound \eqref{eq:C2}. 
\end{proof}
\begin{remark}
	Some cursory numerical experiments indicate that it might be optimal to choose $a=1/2$ in Theorem~\ref{thm:twoguys} (a). For this choice of symbol $\varphi$, it is optimal to choose $f(z)=z_1^2+c z_1z_2 + z_2^2$ for some $0.8<c<0.9$.
\end{remark}

\bibliographystyle{amsplain} 
\bibliography{minimal}

\end{document}